\date{\empty}
\numberwithin{equation}{section} \theoremstyle{plain}
\newtheorem*{thm*}{Main Theorem}
\newtheorem{theorem}{Theorem}[section]
\newtheorem{corollary}[theorem]{Corollary}
\newtheorem*{corollary*}{Corollary}
\newtheorem*{claim*}{Claim}
\newtheorem{lemma}[theorem]{Lemma}
\newtheorem*{lemma*}{Lemma}
\newtheorem{proposition}[theorem]{Proposition}
\newtheorem*{proposition*}{Proposition}
\newtheorem*{remark*}{Remark}
\newtheorem*{example*}{Example}
\newtheorem*{question*}{Question}
\newtheorem*{definition*}{Definition}
\newtheorem*{acknowledgements*}{ACKNOWLEDGEMENTS}
\begin{document}
\begin{center}
{\large  \bf The Moore-Penrose inverse of differences and products of projectors in a ring with involution}\\
\vspace{0.8cm} {\small \bf  Huihui ZHU$^{[1,2]}$, Jianlong CHEN$^{[1]}$\footnote{Corresponding author.

1 Department of Mathematics, Southeast University, Nanjing 210096, China.

2 CMAT-Centro de Matem\'{a}tica, Universidade do Minho, Braga 4710-057, Portugal.

3 Departamento de Matem\'{a}tica e Aplica\c{c}\~{o}es, Universidade do Minho, Braga 4710-057, Portugal.

Email: ahzhh08@sina.com(H. ZHU), jlchen@seu.edu.cn(J. CHEN), pedro@math.uminho.pt (P. PATR\'{I}CIO).}, Pedro PATR\'{I}CIO$^{[2,3]}$ }
\end{center}

\bigskip

{ \bf  Abstract:}  \leftskip0truemm\rightskip0truemm  In this paper, we study the Moore-Penrose inverses of differences and products of projectors in a ring with involution. Also, some necessary and sufficient conditions for the existence of such inverses are given, and their expressions are presented.

\textbf{Keywords:} Moore-Penrose inverses, normal elements, involutions, projectors

\textbf{AMS Subject Classifications:} 15A09, 16U99
\bigskip


\section { \bf Introduction}
Throughout this paper, $R$ is a unital $*$-ring, that is a ring with unity 1 and an involution $a\mapsto a^*$ satisfying that $(a^*)^*=a$, $(a+b)^*=a^*+b^*$, $(ab)^*=b^*a^*$. Recall that an element $a\in R$ is said to have a Moore-Penrose inverse (abbr. MP-inverse) if there exists $b\in R$ such that the following equations hold \cite{P}:
\begin{center}
$aba=a$, $bab=b$, $(ab)^*=ab$, $(ba)^*=ba$.
\end{center}
Any $b$ that satisfies the equations above is called a MP-inverse of $a$. The MP-inverse of $a\in R$ is unique if it exists and is denoted by $a^\dag$. By $R^\dag$ we denote the set of all MP-invertible elements in $R$.

MP-inverse of differences and products of projectors in various sets attracts wide attention from many scholars. For instance, Cheng and Tian \cite{CT} studied the MP-inverses of $pq$ and $p-q$, where $p$, $q$ are projectors in complex matrices. Li \cite{L} investigated how to express MP-inverses of product $pq$ and differences $p-q$ and $pq-qp$, for two given projectors $p$ and $q$ in a $C^*$-algebra. Later, Deng and Wei \cite{DW1} derived some formulae for the MP-inverse of the differences and the products of projectors in a Hilbert space. Recently, Zhang et al. \cite{ZZCW} obtained the equivalences for the existences of differences and products of projectors in a $*$-reducing ring. More results on MP-inverses can be found in \cite{KDC,KP,P}.

Motivated by \cite{KRS}, we investigate the equivalences for the existences of the MP-inverse of differences and products of projectors in a ring with involution. Moreover, the expressions of the MP-inverse of differences and products of projectors are presented. Some known results in $C^*$-algebras are extended.

Note that neither dimensional analysis nor special decomposition in Hilbert spaces and $C^*$-algebras can be used in rings. The results in this paper are proved by a purely ring theoretical method.

\section{Some lemmas}

~~~~We begin with some lemmas which play an important role in the sequel.

In 1992, Harte and Mbekhta \cite{HM} showed the excellent result in $C^*$-algebras that if $a$ is MP-invertible, then $a^*c=ca^*$ and $ac=ca$ imply $a^\dag c =c a^\dag$. More precisely, it follows from \cite[Corollary 12]{Mary} that in a $*$-semigroup $a^\dag \in {\rm comm}^2\{a,a^*\}$, i.e., $a^\dag$ double commutes with $a$ and $a^*$. In 2013, Drazin \cite{D1} then proved the following.

\begin{lemma} {\rm \cite[Corollary 2.7]{D1}} Let S be any $*$-semigroup, let $a_1, a_2, d \in S$, and suppose that $a_1$ and
$a_2$ each have Moore-Penrose inverses $a_1^\dag$, $a_2^\dag$, respectively. Then, for any $d \in S$, $da_1 = a_2d$
and $da_1^*=a_2^*d$ together imply $a_2^\dag d=da_1^\dag$.
\end{lemma}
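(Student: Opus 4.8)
The plan is to treat this as a purely equational identity. Since $S$ is only a $*$-semigroup there is no addition available, so I cannot argue by showing that some difference $a_2^\dag d - d a_1^\dag$ vanishes; instead I must rewrite $a_2^\dag d$ and $d a_1^\dag$ using associativity and the four defining equations until the two expressions are literally forced to coincide. First I would record the auxiliary identities that hold for any Moore--Penrose invertible $a$, all read off from the axioms: $a^* a a^\dag = a^*$ and $a^\dag a a^* = a^*$ (each following from $a a^\dag a = a$ together with the self-adjointness of $a a^\dag$ and of $a^\dag a$), as well as the two Hermitian rewrites $(a^\dag)^* a^* = a a^\dag$ and $a^*(a^\dag)^* = a^\dag a$, which are just the self-adjointness of $a a^\dag$ and $a^\dag a$ read backwards.

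The key idea is to show that both $a_2^\dag d$ and $d a_1^\dag$ reduce to the same bridge element $a_2^\dag a_2\, d\, a_1^\dag$. For the first reduction I would start from $a_2^\dag d = a_2^\dag a_2 a_2^\dag d$, replace $a_2 a_2^\dag$ by $(a_2^\dag)^* a_2^*$, and push $d$ rightward through the hypothesis $a_2^* d = d a_1^*$; then I would pad the trailing $a_1^*$ using $a_1^* = a_1^* a_1 a_1^\dag$, transport $d$ back across the newly created $a_1^*$ via $d a_1^* = a_2^* d$, collapse $(a_2^\dag)^* a_2^* = a_2 a_2^\dag$ and $a_2^\dag a_2 a_2^\dag = a_2^\dag$, and finish with $d a_1 = a_2 d$. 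This yields $a_2^\dag d = a_2^\dag a_2\, d\, a_1^\dag$. The second reduction is the mirror image carried out on the $a_1$-side: starting from $d a_1^\dag = d a_1^\dag a_1 a_1^\dag$, I would rewrite $a_1^\dag a_1 = a_1^*(a_1^\dag)^*$, apply $d a_1^* = a_2^* d$, pad the leading $a_2^*$ using $a_2^* = a_2^\dag a_2 a_2^*$, transport $d$ back with $a_2^* d = d a_1^*$, and collapse using $a_1^*(a_1^\dag)^* = a_1^\dag a_1$ and $a_1^\dag a_1 a_1^\dag = a_1^\dag$, arriving at $d a_1^\dag = a_2^\dag a_2\, d\, a_1^\dag$.

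Comparing the two outcomes gives $a_2^\dag d = a_2^\dag a_2\, d\, a_1^\dag = d a_1^\dag$, which is exactly the assertion. I expect the only real obstacle to be the bookkeeping of the two rewriting chains rather than any conceptual difficulty: with no subtraction at hand, all the content lies in discovering a single bridge term that both sides can reach, and the decisive step is the pad-and-transport manoeuvre, namely inserting a factor such as $a_1^* a_1 a_1^\dag = a_1^*$ (respectively $a_2^\dag a_2 a_2^* = a_2^*$) precisely so that the $*$-intertwining hypothesis $d a_1^* = a_2^* d$ can be invoked a second time, which is what converts the $*$-intertwining into a $\dag$-intertwining. Placing these padding factors on the correct side, so that both chains terminate at the identical element $a_2^\dag a_2\, d\, a_1^\dag$, is the delicate part; everything else is forced by associativity and the Moore--Penrose equations.
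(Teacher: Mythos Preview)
Your proof is correct: both rewriting chains go through exactly as you describe, and the bridge element $a_2^\dag a_2\, d\, a_1^\dag$ is reached from each side using only associativity, the four Penrose equations, and the two intertwining hypotheses. The padding identities $a_1^* = a_1^* a_1 a_1^\dag$ and $a_2^* = a_2^\dag a_2 a_2^*$ that you invoke are valid consequences of $a a^\dag a = a$ combined with the self-adjointness of $a a^\dag$ and $a^\dag a$, so every step is justified.

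As for comparison: the paper does not supply its own proof of this lemma. It is stated with a citation to Drazin \cite[Corollary 2.7]{D1} and used as a black box thereafter. Your argument therefore goes beyond what the paper itself provides; it is a self-contained equational proof in the $*$-semigroup setting, which is precisely the level of generality claimed.
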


The following result in $C^*$-algebras was considered by Koliha \cite{K}. For the convenience of the reader, we give its proof in a ring.

\begin{lemma} \label{ab,a'b} Let $a,b\in R^\dag$ with $ab=ba$ and $a^*b=ba^*$. Then $ab\in R^\dag$ and $(ab)^\dag= b^\dag a^\dag=a^\dag b^\dag$.
\end{lemma}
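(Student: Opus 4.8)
The plan is to exhibit the candidate $x = b^\dag a^\dag$ (which will turn out to coincide with $a^\dag b^\dag$) and verify the four defining equations for the Moore-Penrose inverse of $ab$. The entire argument rests on first manufacturing, out of the two hypotheses $ab=ba$ and $a^*b=ba^*$, a full web of commutation relations among $a,b,a^\dag,b^\dag$ and their adjoints; once these are in place the four Penrose equations reduce to shuffling commuting Hermitian idempotents.

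First I would record the consequences of the hypotheses under the involution. Taking adjoints in $a^*b=ba^*$ gives $b^*a=ab^*$, and since $a^*b^*=(ba)^*=(ab)^*=b^*a^*$ we also get that $a^*$ commutes with $b^*$. Thus $a$ and $a^*$ each commute with both $b$ and $b^*$. Next comes the key engine: the commutation principle recalled before the statement (equivalently Lemma 2.1 with $a_1=a_2$), namely that for $a\in R^\dag$ the relations $ac=ca$ and $a^*c=ca^*$ force $a^\dag c=ca^\dag$. Applying it with $c=b$ (using $ab=ba$ and $a^*b=ba^*$) yields $a^\dag b=ba^\dag$, and with $c=b^*$ (using $ab^*=b^*a$ and $a^*b^*=b^*a^*$) yields $a^\dag b^*=b^*a^\dag$. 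By the symmetric argument with the roles of $a$ and $b$ interchanged I obtain $b^\dag a=ab^\dag$ and $b^\dag a^*=a^*b^\dag$. Finally, since $a^\dag$ now commutes with both $b$ and $b^*$ and $b\in R^\dag$, one further application of the same principle (to $b$, with $c=a^\dag$) gives $a^\dag b^\dag=b^\dag a^\dag$.

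With this web in hand, the idempotents $aa^\dag,a^\dag a,bb^\dag,b^\dag b$ are Hermitian and pairwise commuting, and $a^\dag a$ commutes with $b$ and $b^\dag$ (and symmetrically). I would then check the four equations for $x=b^\dag a^\dag$. For the Hermitian conditions, $abx=a(bb^\dag)a^\dag=(aa^\dag)(bb^\dag)$ and $xab=b^\dag(a^\dag a)b=(b^\dag b)(a^\dag a)$ are each a product of two commuting Hermitian idempotents, hence self-adjoint. For the two inner-inverse conditions, $abxab=ab$ and $xabx=x$ follow by moving the commuting idempotents past one another and collapsing with $aa^\dag a=a$ and $bb^\dag b=b$. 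Uniqueness of the Moore-Penrose inverse then delivers $(ab)^\dag=b^\dag a^\dag=a^\dag b^\dag$.

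The main obstacle is entirely in the second paragraph: correctly extracting all the starred commutation relations from the two hypotheses and, above all, lifting them from $a,b$ to the Moore-Penrose inverses $a^\dag,b^\dag$ via the double-commutant property. Once those commutations are secured, the verification of the four Penrose equations is a routine rearrangement of commuting Hermitian idempotents and carries no real difficulty.
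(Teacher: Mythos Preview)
Your proposal is correct and follows essentially the same route as the paper: both arguments first use the double-commutant property of the Moore--Penrose inverse (Lemma~2.1) to push the commutation relations from $a,b,a^*,b^*$ up to $a^\dag$ and $b^\dag$, conclude $a^\dag b^\dag=b^\dag a^\dag$, and then verify the four Penrose equations by shuffling the commuting Hermitian idempotents $aa^\dag$ and $bb^\dag$. Your derivation of the commutation web is in fact slightly more systematic than the paper's, but the substance is identical.
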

\begin{proof} It follows from Lemma 2.1 that $a^\dag b=ba^\dag$ and $b^\dag a=ab^\dag$. As $b^*a=ab^*$ and $b^*a^*=a^*b^*$, then $b^*a^\dag=a^\dag b^*$, which together with $b a^\dag =a^\dag b$ imply $a^\dag b^\dag=b^\dag a^\dag$. Note that $aa^\dag$ commutes with $b$ and $b^\dag$. Also, $bb^\dag$ commutes with $a$ and $a^\dag$. Hence, $b^\dag a^\dag$ satisfies four equations of Penrose. Indeed, we have

(i) $(ab b^\dag a^\dag)^*=(aba^\dag b^\dag)^*=(aa^\dag bb^\dag)^*=bb^\dag aa^\dag= aa^\dag bb^\dag=a ba^\dag b^\dag=a bb^\dag a^\dag$.

(ii) $(b^\dag a^\dag ab)^*=(b^\dag ba^\dag a)^*=a^\dag a b^\dag b=b^\dag a^\dag a b$.

(iii) $abb^\dag a^\dag ab=aa^\dag bb^\dag ab=aa^\dag bb^\dag ba=aa^\dag ba=aa^\dag ab=ab$.

(iv) $b^\dag a^\dag ab b^\dag a^\dag=b^\dag ba^\dag a b^\dag a^\dag=b^\dag ba^\dag a a^\dag b^\dag= b^\dag ba^\dag b^\dag=b^\dag a^\dag$.

Therefore, $ab\in R^\dag$ and $(ab)^\dag= b^\dag a^\dag=a^\dag b^\dag$.
\end{proof}

Penrose \cite[p. 408]{P} presented the MP-inverse of $A+B$, where $A$ and $B$ are complex matrices such that $A^*B=0$ and $AB^*=0$. His formula indeed holds in a ring with involution.

\begin{lemma}\label{a+b} Let $a,b\in R^\dag$ such that $a^*b=ab^*=0$. Then $(a+b)^\dag=a^\dag+b^\dag$.
\end{lemma}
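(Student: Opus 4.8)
The plan is to guess that $c:=a^\dag+b^\dag$ is the Moore-Penrose inverse of $a+b$ and to verify the four Penrose equations directly. The computation becomes transparent once I have recorded four auxiliary ``orthogonality'' identities, namely $a^\dag b=0$, $b^\dag a=0$, $ab^\dag=0$ and $ba^\dag=0$. Granting these, every mixed product between the $a$-data and the $b$-data disappears, and the verification collapses to the defining relations of $a^\dag$ and $b^\dag$ taken separately.

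First I would derive these four identities, which I expect to be the main obstacle, since everything afterwards is routine bookkeeping. Applying the involution to the hypotheses $a^*b=0$ and $ab^*=0$ immediately gives the two companion relations $b^*a=0$ and $ba^*=0$. The mechanism for transferring a vanishing product from the star to the dagger is the factorization of $a^\dag$ through $a^*$ furnished by self-adjointness of the associated projectors: since $aa^\dag=(aa^\dag)^*=(a^\dag)^*a^*$ one has $a^\dag=a^\dag(aa^\dag)=a^\dag(a^\dag)^*a^*$, and likewise $a^\dag=(a^\dag a)a^\dag=a^*(a^\dag)^*a^\dag$, with the analogous two factorizations for $b^\dag$. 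Feeding each hypothesis into the appropriate factorization then yields the desired identity in one line; for instance $a^\dag b=a^\dag(a^\dag)^*(a^*b)=0$ and $ab^\dag=(ab^*)(b^\dag)^*b^\dag=0$, and symmetrically $b^\dag a=0$ and $ba^\dag=0$ from $b^*a=0$ and $ba^*=0$.

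With the four identities available, I would finish by expanding the Penrose equations. The outer products telescope to $(a+b)c=aa^\dag+bb^\dag$ and $c(a+b)=a^\dag a+b^\dag b$, each a sum of self-adjoint projectors and hence self-adjoint, which settles the two symmetry equations at once. For the remaining two, $(a+b)c(a+b)=(aa^\dag+bb^\dag)(a+b)=a+b$, because the cross terms $aa^\dag b=a(a^\dag b)$ and $bb^\dag a=b(b^\dag a)$ vanish, and similarly $c(a+b)c=(a^\dag a+b^\dag b)c=a^\dag+b^\dag=c$, using $a^\dag ab^\dag=a^\dag(ab^\dag)=0$ and $b^\dag ba^\dag=b^\dag(ba^\dag)=0$. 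All four equations holding, uniqueness of the Moore-Penrose inverse gives $(a+b)^\dag=c=a^\dag+b^\dag$, as claimed.
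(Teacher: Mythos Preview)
Your proof is correct: the four auxiliary vanishing identities follow exactly as you write from the factorizations $a^\dag=a^\dag(a^\dag)^*a^*=a^*(a^\dag)^*a^\dag$ (and the analogous ones for $b^\dag$), and with those in hand the four Penrose equations reduce termwise to the defining relations for $a^\dag$ and $b^\dag$. The paper itself does not supply a proof of this lemma---it simply attributes the formula to Penrose in the matrix case and asserts that it carries over to rings with involution---so your direct verification is precisely the argument one would expect to fill that gap.
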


\section{Main results}

We say that an element $p$ is a projector if $p^2=p=p^*$. Throughout this paper, the elements $p$, $q$ are projectors from the ring $R$.

\begin{theorem}\label{ap+b(1-p)} Let $a,b\in R^\dag$ with $a^*p=pa^*$ and $b^*p=pb^*$. Then $ap+b(1-p)\in R^\dag$ and $(ap+b(1-p))^\dag=a^\dag p+ b^\dag(1-p)$.
\end{theorem}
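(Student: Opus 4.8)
The plan is to decompose $ap+b(1-p)$ as a sum of two mutually orthogonal MP-invertible pieces and then invoke Lemmas 2.2 and 2.3. First I would record the commutation consequences of the hypotheses. Applying the involution to $a^*p=pa^*$ yields $pa=ap$ (using $p^*=p$), so $a$ commutes with $p$; likewise $b$ commutes with $p$. Since $1-p$ is again a projector, both $p$ and $1-p$ are their own MP-inverses, and the relations $ap=pa$, $a^*p=pa^*$ place the pair $(a,p)$ squarely in the hypotheses of Lemma 2.2. I would similarly check that $b(1-p)=(1-p)b$ and $b^*(1-p)=(1-p)b^*$ follow from $bp=pb$ and $b^*p=pb^*$.

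Next, applying Lemma 2.2 to $(a,p)$ gives $ap\in R^\dag$ with $(ap)^\dag=p^\dag a^\dag=a^\dag p$, and applying it to $(b,1-p)$ gives $b(1-p)\in R^\dag$ with $(b(1-p))^\dag=(1-p)^\dag b^\dag=b^\dag(1-p)$. Writing $x=ap$ and $y=b(1-p)$, the goal reduces to showing $(x+y)^\dag=x^\dag+y^\dag$, for which Lemma 2.3 demands the two orthogonality conditions $x^*y=0$ and $xy^*=0$.

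The verification of these conditions is the crux, and it hinges on the single identity $p(1-p)=0$ together with the commutation relations. For $x^*y=pa^*b(1-p)$, I would push $p$ past $a^*$ and then past $b$ using $pa^*=a^*p$ and $pb=bp$, arriving at $a^*b\,p(1-p)=0$; for $xy^*=ap(1-p)b^*$, the interior factor $p(1-p)=0$ collapses it directly. With both orthogonality conditions in hand, Lemma 2.3 yields $(ap+b(1-p))^\dag=(ap)^\dag+(b(1-p))^\dag=a^\dag p+b^\dag(1-p)$, as claimed. I expect no serious obstacle here: the only subtlety is applying the commutation relations in the correct order so that an interior $p(1-p)$ is exposed rather than left buried.
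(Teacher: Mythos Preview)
Your proposal is correct and follows essentially the same route as the paper: deduce $ap=pa$ and $bp=pb$ from the hypotheses, apply Lemma~2.2 to obtain $(ap)^\dag=a^\dag p$ and $(b(1-p))^\dag=b^\dag(1-p)$, verify the two orthogonality conditions $(ap)^*b(1-p)=0$ and $ap(b(1-p))^*=0$ by exposing an interior $p(1-p)$, and finish with Lemma~2.3. The only cosmetic difference is that the paper commutes $(1-p)$ leftward past $b$ rather than $p$ rightward past $b$ in the first orthogonality check, which is immaterial.
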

\begin{proof} As $a^*p=pa^*$, then $ap=pa$ since $p$ is a projector. Similarly, $bp=pb$. We have $(ap)^*b(1-p)=0$. Indeed, $(ap)^*b(1-p)=pa^*(1-p)b=a^*p(1-p)b=0$. Also, $ap(b(1-p))^*=0$. By Lemma \ref{ab,a'b}, it follows that $(ap)^\dag=a^\dag p$ and $(b(1-p))^\dag=b^\dag (1-p)$. In view of Lemma \ref{a+b}, we obtain $ap+b(1-p)\in R^\dag$ and $(ap+b(1-p))^\dag=a^\dag p+ b^\dag(1-p)$.
\end{proof}

Recall from \cite{KP} that an element $a\in R$ is $*$-cancellable if $a^*ax=0$ implies $ax=0$ and $xaa^*=0$ implies $xa=0$. In a $C^*$-algebra, every element is $*$-cancellable. A ring $R$ is called $*$-reducing ring if all elements in $R$ are $*$-cancellable. 

We get the following result, under the condition of $*$-cancellabilities of some elements, rather than $*$-reducing rings in \cite{ZZCW}.

\begin{proposition} \label{equ} Let $p(1-q)$ and $q(1-p)$ be $*$-cancellable. Then the following conditions are equivalent{\rm :}\\
 $(1)~ 1-pq\in R^\dag$, $(2)~ 1-pqp\in R^\dag$, $(3)~ p-pqp\in R^\dag$, $(4)~ p-pq\in R^\dag$, $(5)~ p-qp\in R^\dag$,\\
 $(6)~ 1-qp\in R^\dag$, $(7)~ 1-qpq\in R^\dag$, $(8)~ q-qpq\in R^\dag$, $(9)~ q-qp\in R^\dag$, $(10)~ q-pq\in R^\dag$.
\end{proposition}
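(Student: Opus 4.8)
The plan is to exploit the two symmetries of the list to cut the ten conditions down to a short chain, prove that chain, and then isolate the one genuinely hard link. First, the involution $x\mapsto x^*$ preserves Moore--Penrose invertibility (if $b=x^\dag$ then $b^*=(x^*)^\dag$), and the ten elements occur in adjoint pairs: $(1-pq)^*=1-qp$, $(p-pq)^*=p-qp$, $(q-qp)^*=q-pq$, while $1-pqp,\ p-pqp,\ 1-qpq,\ q-qpq$ are self-adjoint. Hence $(1)\Leftrightarrow(6)$, $(4)\Leftrightarrow(5)$, $(9)\Leftrightarrow(10)$ are immediate. Second, the statement is symmetric under interchanging the roles of $p$ and $q$: this leaves the $*$-cancellability hypotheses invariant (it merely swaps $p(1-q)$ with $q(1-p)$) and carries condition $(k)$ to condition $(k+5)$ for $k=1,\dots,5$, so any implication proved for a general pair of projectors also holds with $p$ and $q$ swapped. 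It therefore suffices to prove that $(1),(2),(3),(4),(5)$ are equivalent; the equivalence of $(6)$--$(10)$ then follows by symmetry, and the two halves are glued together by $(1)\Leftrightarrow(6)$.

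Within the $p$-group I would proceed as follows. Put $a=p-pq=p(1-q)$, so that $a^*=p-qp$ and $aa^*=p(1-q)p=p-pqp$. The identity $(aa^*)^\dag=(a^\dag)^*a^\dag$ (a direct Penrose check, valid whenever $a\in R^\dag$) gives $(4)\Rightarrow(3)$, while the converse $a^\dag=a^*(aa^*)^\dag$ holds once $aa^*\in R^\dag$ and $a$ is $*$-cancellable --- which is exactly the hypothesis on $p(1-q)$; this yields $(3)\Leftrightarrow(4)$. For $(2)\Leftrightarrow(3)$ I would use the splitting $1-pqp=(1-p)+(p-pqp)$: the two summands are self-adjoint and $(1-p)(p-pqp)=0$, so Lemma 2.3 gives $(3)\Rightarrow(2)$; conversely $p$ commutes with $1-pqp$ and with its adjoint, so Lemma 2.2 applied to $p$ and $1-pqp$ gives $p(1-pqp)=p-pqp\in R^\dag$, i.e.\ $(2)\Rightarrow(3)$. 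Together with $(4)\Leftrightarrow(5)$ this links $(2),(3),(4),(5)$; applying the $p\leftrightarrow q$ symmetry links $(7),(8),(9),(10)$ and supplies the cross-edges $(2)\Leftrightarrow(7)$, $(3)\Leftrightarrow(8)$, $(4)\Leftrightarrow(9)$, $(5)\Leftrightarrow(10)$.

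At this point every condition except the pair $\{(1),(6)\}$ is connected, so the whole proof reduces to a single bridge, say $(1)\Leftrightarrow(2)$, and this is the main obstacle: unlike the other links, $1-pq$ is not self-adjoint, no projector commutes with it, and it admits no splitting orthogonal enough for Lemma 2.3 (for $1-pq=(1-p)+(p-pq)$ one has $(1-p)(p-pq)=0$ but $(1-p)(p-qp)\neq0$). My proposed attack is the unit factorisation
\[
1-pq=(1+pqp-pq)(1-pqp),\qquad (1+pqp-pq)(1+pq-pqp)=1,
\]
the inverse being clear since $(pqp-pq)^2=0$. This presents $1-pq$ and $1-pqp$ as associates through a unit, so von Neumann regularity passes between them at once. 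The delicate point is that the unit $1+pqp-pq$ is not unitary, so Moore--Penrose invertibility does not transfer formally; I would close the gap by recovering the two required self-adjoint projections explicitly --- building $(1-pq)^\dag$ out of $(1-pqp)^\dag$ --- or, equivalently, by running the $*$-cancellability criterion through the factorisation. I expect this final transfer to be the only computationally heavy part of the argument.
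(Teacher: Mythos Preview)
Your symmetry reductions and your arguments for $(2)\Leftrightarrow(3)$ and $(3)\Leftrightarrow(4)$ are essentially the paper's proof (the paper uses Theorem~\ref{ap+b(1-p)} in place of Lemma~\ref{a+b} for $(3)\Rightarrow(2)$, but this is cosmetic). One slip: the $p\leftrightarrow q$ swap does \emph{not} produce the cross-edges $(2)\Leftrightarrow(7)$, $(3)\Leftrightarrow(8)$, $(4)\Leftrightarrow(9)$, $(5)\Leftrightarrow(10)$; it only says that an implication proved for a generic pair of projectors also holds with the roles reversed, so from $(2)\Leftrightarrow\cdots\Leftrightarrow(5)$ you obtain $(7)\Leftrightarrow\cdots\Leftrightarrow(10)$, but the two blocks remain disconnected. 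This is harmless for your overall plan, since once $(1)\Leftrightarrow(2)$ is proved generically the swap also yields $(6)\Leftrightarrow(7)$, and the adjoint link $(1)\Leftrightarrow(6)$ then glues everything---so your conclusion that a single bridge suffices survives.

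The substantive gap is exactly that bridge. The paper does not prove $(1)\Leftrightarrow(2)$ here: it simply quotes \cite[Theorem~4]{ZZCW} and moves on. Your factorisation $1-pq=(1+pqp-pq)(1-pqp)$ with $(pqp-pq)^2=0$ is correct and immediately transfers von Neumann regularity, but---as you yourself flag---the unit $u=1+pqp-pq$ is not unitary, so Moore--Penrose invertibility does not follow formally. Your proposed remedies (``build $(1-pq)^\dag$ explicitly'' or ``run the $*$-cancellability criterion through the factorisation'') are left as intentions rather than arguments: the obvious candidate $(1-pqp)^\dag u^{-1}$ fails because $u(1-pqp)(1-pqp)^\dag u^{-1}$ is not a priori self-adjoint, and you have no $*$-cancellability hypothesis on $1-pq$ to invoke. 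So the proposal is incomplete at precisely the step you identified as hard; to finish you must either reproduce the argument of \cite{ZZCW} or exhibit an explicit formula for $(1-pq)^\dag$ in terms of $(1-pqp)^\dag$, and neither is done here.
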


\begin{proof} As $a\in R^\dag \Leftrightarrow a^*\in R^\dag$, then $(1)\Leftrightarrow(6)$ and $(4)\Leftrightarrow(5)$. Also, as $p$ and $q$ play symmetric roles and  $(1)\Leftrightarrow(2)$ by \cite[Theorem 4]{ZZCW}, it is then sufficient to prove that $(2)\Leftrightarrow(3)\Leftrightarrow(4)$.

$(2)\Rightarrow(3)$ Noting $p-pqp=p(1-pqp)=(1-pqp)p$, it is an immediate result of Lemma \ref{ab,a'b}.

$(3)\Rightarrow(2)$ Since $1-pqp=p(p-pqp)+1-p$ and $(p-pqp)^*=p-pqp$, it follows from Theorem \ref{ap+b(1-p)} that $1-pqp\in R^\dag$.

$(3)\Leftrightarrow(4)$  Note that $a\in R^\dag\Leftrightarrow aa^*\in R^\dag$ and $a$ is $*$-cancellable by \cite[Theorem 5.4]{KP}. As $p(1-q)(p(1-q))^*=p-pqp\in R^\dag$ and $p-pq$ is $*$-cancellable, the result follows.
\end{proof}

Recall that an element $a\in R$ is normal if $aa^*=a^*a$. Further, if a normal element $a$ is MP-invertible, then $aa^\dag=a^\dag a$ by Lemma \ref{ab,a'b}.

In 2004, Koliha, Rako\v{c}evi\'{c} and  Stra\v{s}kraba \cite{KRS} showed that $p-q$ is nonsingular if and only if $1-pq$ and $p+q-pq$ are both nonsingular, for projectors $p$, $q$ in complex matrices, which is a $*$-cancellable ring. It is natural to consider whether the same property can be inherited to the MP-inverse in a ring with involution. The following result illustrates its possibility.

\begin{theorem} \label{equivalence} Let $p-q$, $p(1-q)$ and $q(1-p)$ be $*$-cancellable. Then the following conditions are equivalent{\rm :}

$(1)$ $p-q\in R^\dag$,

$(2)$ $1-pq\in R^\dag$,

$(3)$ $p+q-pq\in R^\dag$.
\end{theorem}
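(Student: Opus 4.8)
The plan is to reduce everything to the single equivalence $(1)\Leftrightarrow(2)$ and to obtain $(1)\Leftrightarrow(3)$ for free from the complementation symmetry $p\mapsto 1-p$, $q\mapsto 1-q$. Under this substitution one computes $p-q\mapsto -(p-q)$, $1-pq\mapsto p+q-pq$, and $p+q-pq\mapsto 1-pq$, so it fixes condition $(1)$ and interchanges $(2)$ and $(3)$. The hypotheses are also preserved, since $p(1-q)\mapsto(1-p)q$, $q(1-p)\mapsto(1-q)p$, and $*$-cancellability is invariant under $a\mapsto a^*$ and $a\mapsto -a$. Thus, once $(1)\Leftrightarrow(2)$ is established for arbitrary projectors, applying it to $1-p,1-q$ yields $(1)\Leftrightarrow(3)$.

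The central object is $(p-q)^2$, and the first step is to record the orthogonal block identity
\[
(p-q)^2 = p(1-q)p + (1-p)q(1-p) =: A+B,
\]
where $A=p-pqp\in pRp$ and $B=(1-p)q(1-p)\in(1-p)R(1-p)$ are self-adjoint with $A^*B=AB^*=0$; in particular $p$ commutes with $(p-q)^2$. Since $p-q$ is self-adjoint and $*$-cancellable, $(p-q)(p-q)^*=(p-q)^2$, so by \cite[Theorem 5.4]{KP} we have $p-q\in R^\dag$ if and only if $(p-q)^2\in R^\dag$. Likewise $A=(p(1-q))(p(1-q))^*$ and $B=((1-p)q)((1-p)q)^*$, whence $A\in R^\dag\Leftrightarrow p-pq\in R^\dag$ and $B\in R^\dag\Leftrightarrow q-pq\in R^\dag$; by Proposition \ref{equ} each of these is equivalent to $1-pq\in R^\dag$.

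For $(2)\Rightarrow(1)$, assume $1-pq\in R^\dag$. Then $A,B\in R^\dag$ by the previous paragraph, and as $A^*B=AB^*=0$, Lemma \ref{a+b} gives $(p-q)^2=A+B\in R^\dag$, hence $p-q\in R^\dag$. For $(1)\Rightarrow(2)$, set $x:=(p-q)^2\in R^\dag$. Because $x$ is self-adjoint and commutes with the projector $p$, Lemma 2.1 (with $a_1=a_2=x$, $d=p$) yields $x^\dag p=px^\dag$. A direct check of the four Penrose equations then shows that the compression $A=px$ is MP-invertible with $A^\dag=x^\dag p$: indeed $AA^\dag=p(xx^\dag)p$ and $A^\dag A=(x^\dag x)p$ are self-adjoint, while the outer identities $AA^\dag A=A$ and $A^\dag AA^\dag=A^\dag$ follow from $xx^\dag x=x$ together with $px=xp$ and $px^\dag=x^\dag p$. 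Thus $p-pqp=A\in R^\dag$, and Proposition \ref{equ} delivers $1-pq\in R^\dag$.

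The genuinely load-bearing step is $(1)\Rightarrow(2)$: one must transfer MP-invertibility from the full self-adjoint element $x=(p-q)^2$ down to its compression $A=px$. The mechanism is that $x^\dag$ lies in the double commutant of $\{x,x^*\}$ — precisely the content of Lemma 2.1 — so that $x^\dag$ commutes with $p$; without this commutation the natural candidate $x^\dag p$ fails the Penrose equations, and it is exactly here that self-adjointness of $p-q$ and the $*$-cancellability hypotheses do the work. The block identity and the Penrose verifications are routine, and the remaining equivalences are packaged into Proposition \ref{equ}.
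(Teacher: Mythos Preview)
Your proof is correct and follows essentially the same route as the paper: both reduce $(1)\Leftrightarrow(2)$ to the orthogonal decomposition $(p-q)^2=(p-pqp)+(1-p)q(1-p)$ relative to $p$, invoke Proposition~\ref{equ} together with \cite[Theorem~5.4]{KP} to pass between $1-pq$ and the two blocks, and obtain $(1)\Leftrightarrow(3)$ by the substitution $p\mapsto 1-p$, $q\mapsto 1-q$. The only differences are in packaging: in $(1)\Rightarrow(2)$ the paper applies Theorem~\ref{ap+b(1-p)} to $1-pqp=(p-q)^2p+(1-p)$ and then \cite[Theorem~4]{ZZCW}, whereas you verify directly via Lemma~2.1 that $px$ has MP-inverse $x^\dag p$; and in $(2)\Rightarrow(1)$ the paper reaches $1-\bar p\,\bar q\,\bar p\in R^\dag$ through a detour via $p-pq=\bar q-\bar p\,\bar q$ and a second application of Proposition~\ref{equ} with $\bar p,\bar q$, while you get $B=(q-pq)(q-pq)^*\in R^\dag$ in one step from item~(10) of Proposition~\ref{equ}, which is a little cleaner.
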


\begin{proof}  $(1)\Rightarrow(2)$ Note that $p-q$ is normal. It follows from Lemma \ref{ab,a'b} that $((p-q)^2)^\dag=((p-q)^\dag)^2$. As $p(p-q)^2=(p-q)^2p=p-pqp$, then $1-pqp=(p-q)^2 p+1-p$ and hence $1-pqp\in R^\dag$ according to Theorem \ref{ap+b(1-p)}. So, $1-pq\in R^\dag$ by \cite[Theorem 4]{ZZCW}.

$(2)\Rightarrow(1)$ By \cite[Theorem 4]{ZZCW}, we know that $1-pq\in R^\dag$ implies $1-pqp\in R^\dag$. Let $\overline{p}=1-p$ and $\overline{q}=1-q$. Note that $p(1-q)$ is $*$-cancellable. We have $1-pq\in R^\dag\Rightarrow p-pq=\overline{q}-\overline{p}~\overline{q}\in R^\dag$ by $(1) \Rightarrow (4)$ in Proposition \ref{equ}. Also, as $\overline{q}(1-\overline{p})=p(1-q)$ is $*$-cancellable, then $\overline{q}-\overline{p}~\overline{q}\in R^\dag$ implies $1-\overline{q}~\overline{p}\in R^\dag$ by $(10)\Rightarrow(6)$ in Proposition \ref{equ}, which means $1-\overline{p}~\overline{q}\in R^\dag$ since $a\in R^\dag\Leftrightarrow a^*\in R^\dag$. Again, applying \cite[Theorem 4]{ZZCW}, it follows that $1-\overline{p}~\overline{q}~\overline{p}\in R^\dag$.

Setting $a=1-pqp$ and $b=1-\overline{p}~\overline{q}~\overline{p}$, then $a^*p=pa^*$ and $b^*p=pb^*$. Since $(p-q)^2=ap+b(1-p)$, we obtain $(p-q)^2=(p-q)(p-q)^*\in R^\dag$ by Theorem \ref{ap+b(1-p)} and hence $p-q\in R^\dag$ from \cite[Theorem 5.4]{KP}.

$(1)\Leftrightarrow(3)$ In $(1)\Leftrightarrow(2)$, replacing $p$, $q$ by $1-p$, $1-q$, respectively.
\end{proof}

Next, we mainly consider the representations of the MP-inverse by aforementioned results.

\begin{theorem} \label{FGH} Let $p-q\in R^\dag$. Define $F$, $G$ and $H$ as
\begin{center}
$F=p(p-q)^\dag$, $G=(p-q)^\dag p$, $H=(p-q)(p-q)^\dag$.
\end{center}
Then, we have

$(1)$ $F^2=F=(p-q)^\dag (1-q)$,

$(2)$ $G^2=G=(1-q)(p-q)^\dag$,

$(3)$ $H^2=H=H^*$.
\end{theorem}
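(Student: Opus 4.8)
The plan is to reduce the whole statement to part $(1)$, since part $(2)$ will follow from part $(1)$ by applying the involution and part $(3)$ is immediate from the Penrose equations. Throughout I write $s=p-q$. Because $p^*=p$ and $q^*=q$, we have $s^*=s$, so $s$ is normal; hence by the remark preceding Theorem \ref{equivalence}, $ss^\dag=s^\dag s$, and taking the involution in $s=s^*$ gives $(s^\dag)^*=s^\dag$. From $ss^\dag=s^\dag s$ and the Penrose equations I will freely use the reductions $s^\dag=(s^\dag)^2s$ and $s(s^\dag)^2=s^\dag$.

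First I would record the elementary intertwining identities obtained by multiplying out the projectors: $p(p-q)=(p-q)(1-q)$ and, taking adjoints, $(p-q)p=(1-q)(p-q)$; together with $p(1-q)=p(p-q)$ and $(1-q)p=(p-q)p$, each pair of sides of the latter two coinciding with $p-pq$, respectively $p-qp$. In the shorthand these read $ps=s(1-q)$, $sp=(1-q)s$, $p(1-q)=ps$ and $(1-q)p=sp$.

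The crux is the identity $F=p(p-q)^\dag=(p-q)^\dag(1-q)$. The key observation is that although $p$ need not commute with $s$, it does commute with $s^2$: combining the first two intertwining identities, $ps^2=(ps)s=s(1-q)s=s(sp)=s^2p$. Now $s^2$ is self-adjoint, and by Lemma \ref{ab,a'b} applied with $a=b=s$ it lies in $R^\dag$ with $(s^2)^\dag=(s^\dag)^2$. Since $p$ commutes with $s^2=(s^2)^*$, Lemma 2.1 (taking $a_1=a_2=s^2$ and $d=p$) yields $p(s^\dag)^2=(s^\dag)^2p$. Writing $s^\dag=(s^\dag)^2s$ and using this commutation together with $ps=s(1-q)$ gives $ps^\dag=p(s^\dag)^2s=(s^\dag)^2ps=(s^\dag)^2s(1-q)=s^\dag(1-q)$, the desired formula for $F$. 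Applying the involution to $ps^\dag=s^\dag(1-q)$ then gives $s^\dag p=(1-q)s^\dag$, which is exactly the formula for $G=(p-q)^\dag p$ in part $(2)$ and exhibits $G=F^*$.

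Idempotence of $F$ then follows cheaply: using $s^\dag p=(1-q)s^\dag$ and $p(1-q)=ps$,
\[
F^2=p s^\dag p s^\dag=p\big((1-q)s^\dag\big)s^\dag=\big(p(1-q)\big)(s^\dag)^2=(ps)(s^\dag)^2=p s^\dag=F,
\]
the last step being $s(s^\dag)^2=s^\dag$. For part $(2)$, applying the involution to $F^2=F$ gives $(F^*)^2=F^*$, i.e. $G^2=G$, and $G=(1-q)(p-q)^\dag$ was already obtained above. Finally part $(3)$ is immediate: $H=(p-q)(p-q)^\dag=ss^\dag$ satisfies $H^*=H$ by the third Penrose equation and $H^2=(ss^\dag)(ss^\dag)=(ss^\dag s)s^\dag=ss^\dag=H$ by the first. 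I expect the only genuine obstacle to be the passage from $ps^2=s^2p$ to $ps^\dag=s^\dag(1-q)$: one must notice that commutation holds with $s^2$ rather than $s$, transport it to $(s^\dag)^2=(s^2)^\dag$ via Lemma 2.1, and then descend to the first power of $s^\dag$ using $s^\dag=(s^\dag)^2s$; everything else reduces to adjoints and the Penrose equations.
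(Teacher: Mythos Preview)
Your proof is correct and follows essentially the same route as the paper: both observe that $p$ commutes with $(p-q)^2$, transport this via Lemma~2.1 to commutation with $((p-q)^\dag)^2$, use the intertwining $p(p-q)=(p-q)(1-q)$ to obtain $F=(p-q)^\dag(1-q)$, derive $G$ as $F^*$, and treat part~(3) as immediate from the Penrose equations. The only cosmetic differences are the order in which the chain for $F$ is read and the intermediate steps chosen in verifying $F^2=F$.
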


\begin{proof} (1) We first prove $F=(p-q)^\dag (1-q)$.

As $(p-q)^*=p-q$ and $p-q\in R^\dag$, then $(p-q)^2\in R^\dag$ by Lemma \ref{ab,a'b}. Moreover, $((p-q)^2)^\dag=((p-q)^\dag)^2$. Also, $(p-q)(p-q)^\dag=(p-q)^\dag(p-q)$. From $p(p-q)^2=(p-q)^2p$ and $p((p-q)^2)^*=((p-q)^2)^*p$, we have $p((p-q)^\dag)^2=((p-q)^\dag)^2p$ using Lemma 2.1.

Hence,
\begin{eqnarray*} (p-q)^\dag(1-q)&=& ((p-q)^\dag)^2(p-q)(1-q)=((p-q)^\dag)^2p(1-q)\\
&=&((p-q)^\dag)^2p(p-q)=p((p-q)^\dag)^2(p-q)\\
&=& p(p-q)^\dag\\
&=&F.
\end{eqnarray*}

We now show $F^2=F$. Since $p(p-q)^\dag=(p-q)^\dag(1-q)$, one can get
\begin{eqnarray*}
F^2&=&(p-q)^\dag(1-q)p(p-q)^\dag\\
&=&(p-q)^\dag(1-q)(p-q)(p-q)^\dag\\
&=&p(p-q)^\dag(p-q)(p-q)^\dag\\
&=&p(p-q)^\dag\\
&=&F.
\end{eqnarray*}

(2) By $F^*=G$.

(3) It is trivial.
\end{proof}

Under the same symbol in Theorem \ref{FGH}, more relations among $F$, $G$ and $H$ are given in the following result.

\begin{corollary} \label{FGH1} Let $p-q\in R^\dag$. Then

$(1)$ $q(p-q)^\dag=(p-q)^\dag(1-p),$

$(2)$ $qH=Hq,$

$(3)$ $G(1-q)=(1-q)F.$
\end{corollary}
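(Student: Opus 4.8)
The plan is to reduce all three identities to the two alternative expressions for $F$ and $G$ recorded in Theorem \ref{FGH}, namely $F=p(p-q)^\dag=(p-q)^\dag(1-q)$ and $G=(p-q)^\dag p=(1-q)(p-q)^\dag$, together with the fact that $p-q$ is self-adjoint, hence normal, so that $(p-q)^\dag$ is self-adjoint and $(p-q)(p-q)^\dag=(p-q)^\dag(p-q)$. This last commutation is already exploited inside the proof of Theorem \ref{FGH} and follows from Lemma \ref{ab,a'b}. With these facts in hand each part becomes a short manipulation, so the work is mostly in choosing the right substitution.

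For $(1)$, I would write $q=p-(p-q)$ and expand, obtaining $q(p-q)^\dag=p(p-q)^\dag-(p-q)(p-q)^\dag$. Substituting $p(p-q)^\dag=(p-q)^\dag(1-q)$ and $(p-q)(p-q)^\dag=(p-q)^\dag(p-q)$ yields $(p-q)^\dag[(1-q)-(p-q)]=(p-q)^\dag(1-p)$, as required. Equivalently, one may replace the pair $(p,q)$ by $(q,p)$ in Theorem \ref{FGH}$(1)$ and use $(q-p)^\dag=-(p-q)^\dag$; this produces $(1)$ at once, the hypothesis $q-p\in R^\dag$ being equivalent to $p-q\in R^\dag$.

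Part $(3)$ is immediate once $F$ and $G$ are written in their second forms. Indeed $G(1-q)=(1-q)(p-q)^\dag(1-q)$ from $G=(1-q)(p-q)^\dag$, while $(1-q)F=(1-q)(p-q)^\dag(1-q)$ from $F=(p-q)^\dag(1-q)$, so the two sides coincide with no further effort.

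The only part requiring a genuine idea is $(2)$, which I regard as the main obstacle. Here I would use normality to write $H=(p-q)^\dag(p-q)$ and then apply the already-established part $(1)$. This gives $qH=q(p-q)^\dag(p-q)=(p-q)^\dag(1-p)(p-q)$ and $Hq=(p-q)^\dag(p-q)q$. The two right-hand sides are then seen to agree via the elementary identity $(1-p)(p-q)=pq-q=(p-q)q$; recognizing that both bracketed factors collapse to $pq-q$ is the crux of the argument, after which $qH=Hq=(p-q)^\dag(pq-q)$ follows.
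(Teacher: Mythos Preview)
Your argument is correct and rests on the same ingredients as the paper's proof: the alternative expressions $F=(p-q)^\dag(1-q)$ and $G=(1-q)(p-q)^\dag$ from Theorem~\ref{FGH}, the self-adjointness of $p-q$ (hence of $(p-q)^\dag$ and $H$), and the commutation $(p-q)(p-q)^\dag=(p-q)^\dag(p-q)$.

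The execution differs in small but pleasant ways. For $(1)$ the paper simply says ``by a similar proof of Theorem~\ref{FGH}(1)'', which is your second argument (swap $p$ and $q$, use $(q-p)^\dag=-(p-q)^\dag$); your first argument via $q=p-(p-q)$ is an equally short alternative. For $(2)$ the paper first takes the involution of $(1)$ to obtain $(1-p)(p-q)^\dag=(p-q)^\dag q$ and then routes the computation through the symmetric expression $-q(p-q)^\dag q$; your approach uses only $(1)$ together with $H=(p-q)^\dag(p-q)$ and the elementary identity $(1-p)(p-q)=(p-q)q=pq-q$, which is a bit more direct. For $(3)$ the paper passes through $(p-q)^\dag(p-q)(1-q)=(p-q)^\dag p(p-q)$ and then back to $(1-q)(p-q)^\dag(p-q)$, whereas your one-line observation that both sides equal $(1-q)(p-q)^\dag(1-q)$ is shorter. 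None of these differences is substantive; both proofs are equivalent in spirit.
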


\begin{proof} (1) can be obtained by a similar proof of Theorem \ref{FGH}(1).

(2) Taking involution on (1), it follows that $(1-p)(p-q)^\dag=(p-q)^\dag q$ and hence
\begin{eqnarray*}
qH&=&q(p-q)(p-q)^\dag=q(p-1)(p-q)^\dag\\
&=&-q(p-q)^\dag q=-(p-q)^\dag(1-p)q\\
&=&-(p-q)^\dag(q-p)q\\
&=&Hq.
\end{eqnarray*}

(3) We have
\begin{eqnarray*}
G(1-q)&=&(p-q)^\dag (p-q)(1-q)=(p-q)^\dag p(p-q)\\
&=&(1-q)(p-q)^\dag (p-q)\\
&=&(1-q)F.
\end{eqnarray*}
\end{proof}

Keeping in mind the relations in Theorem \ref{FGH} and Corollary \ref{FGH1}, we give the following equalities, where $\overline{a}$ denotes $1-a$.

\begin{corollary} \label{FGH2} Let $p-q\in R^\dag$. Then

$(1)$ $Fp=pG=pH=Hp,$

$(2)$ $qHq=qH=Hq=HqH$,

$(3)$ $\overline{q}\overline{F}=\overline{G}\overline{q}=\overline{q}\overline{F}\overline{q}$,

$(4)$ $(p-q)^\dag=F+G-H$.
\end{corollary}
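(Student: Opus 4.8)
The plan is to derive all four identities purely algebraically from the relations already established in Theorem \ref{FGH} and Corollary \ref{FGH1}, together with the projector rules $p^2=p$, $q^2=q$, the idempotency $F^2=F$, $G^2=G$, $H^2=H$, and the self-adjointness of $(p-q)^\dag$ and $H$. No ingredient beyond these is needed; the work lies in matching the right previously-proved relation to each part.

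For (1) I would first note that $Fp=p(p-q)^\dag p=pG$ holds directly from the definitions of $F$ and $G$. To bring $H$ into the chain I would use the normality identity $H=(p-q)(p-q)^\dag=(p-q)^\dag(p-q)$ recorded in the proof of Theorem \ref{FGH}, together with the one-sided simplifications $(p-q)p=(1-q)p$ and $p(p-q)=p(1-q)$ (both immediate from $p^2=p$). Feeding these into the alternative forms $F=(p-q)^\dag(1-q)$ and $G=(1-q)(p-q)^\dag$ from Theorem \ref{FGH} yields $Hp=(p-q)^\dag(1-q)p=Fp$ and $pH=p(1-q)(p-q)^\dag=pG$, which closes the chain $Fp=pG=pH=Hp$.

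Part (2) is the quickest: starting from $qH=Hq$ (Corollary \ref{FGH1}(2)), idempotency gives $qHq=q^2H=qH$ and $HqH=H^2q=Hq$, so all four expressions collapse to $qH=Hq$. For (3), the first equality $\overline q\,\overline F=\overline G\,\overline q$ is just the expansion $(1-q)(1-F)-(1-G)(1-q)=(G-F)+(qF-Gq)$, which vanishes by the relation $G(1-q)=(1-q)F$ of Corollary \ref{FGH1}(3). The second equality $\overline q\,\overline F\,\overline q=\overline q\,\overline F$ reduces to showing $\overline q\,\overline F q=0$; here the key auxiliary fact is $Fq=0$, which I would obtain by taking the adjoint of Corollary \ref{FGH1}(1) to get $(p-q)^\dag q=(1-p)(p-q)^\dag$ and then using $p(1-p)=0$. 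Once $Fq=0$ is in hand, $\overline q\,\overline F q=(1-q)(q-Fq)=(1-q)q=0$ follows at once.

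Finally (4) is a short cancellation: writing $H=(p-q)(p-q)^\dag=p(p-q)^\dag-q(p-q)^\dag$, the term $p(p-q)^\dag$ cancels $F$ in $F+G-H$, leaving $(p-q)^\dag p+q(p-q)^\dag$; applying $q(p-q)^\dag=(p-q)^\dag(1-p)$ from Corollary \ref{FGH1}(1) and factoring out $(p-q)^\dag$ collapses this to $(p-q)^\dag(p+1-p)=(p-q)^\dag$. I expect the only genuinely non-routine step to be the vanishing relation $Fq=0$ (equivalently, by adjunction, $qG=0$) used in (3); everything else is bookkeeping once the correct prior identity is matched to each claim.
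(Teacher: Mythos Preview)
Your proposal is correct and follows exactly the approach the paper intends: the paper gives no explicit proof of this corollary, merely stating that the identities follow from Theorem~\ref{FGH} and Corollary~\ref{FGH1}, and your argument supplies precisely those routine verifications. The auxiliary relation $Fq=0$ you single out is indeed the one step that requires a moment's thought, and your derivation of it via the adjoint of Corollary~\ref{FGH1}(1) is clean and correct.
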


In general, $p-q\in R^\dag$ can not imply $p + q\in R^\dag$. Such as, take $R=\mathbb{Z}$ and $1=p=q\in R$, then $p-q=0\in R^\dag$, but $p+q=2\notin R^\dag$ since 2 is not invertible.

\begin{theorem} \label{2inv} Let $2$ be invertible in $R$. Then the following conditions are equivalent{\rm :}

\rm {(1)} $pH=p$,

\rm {(2)} $(p+q)H=(p+q)$,

\rm {(3)} $p+q\in R^\dag$ and $(p+q)^\dag=(p-q)^\dag(p+q)(p-q)^\dag$.
\end{theorem}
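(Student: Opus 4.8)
The plan is to reduce everything to an anticommuting pair hidden inside $p+q$ and $p-q$. Write $a=p-q$ and $t=p+q-1$, both self-adjoint, and set $s=p+q=1+t$. A direct expansion using $p^2=p=p^*$ and $q^2=q=q^*$ gives the two identities $at+ta=0$ and $a^2+t^2=1$; these are the engine of the whole argument. Since $a=p-q$ is self-adjoint it is normal, so by the remark preceding Theorem~\ref{equivalence} we have $H=aa^\dag=a^\dag a$ together with $aH=Ha=a$. Applying Lemma~2.1 to $d=t$, $a_1=a$, $a_2=-a$ (legitimate because $ta=-at$ and $a^*=a$, so $da_1=a_2d$ and $da_1^*=a_2^*d$) promotes the anticommutation to the level of the Moore--Penrose inverse: $a^\dag t=-t a^\dag$. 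From this one reads off immediately that $t$ commutes with $(a^\dag)^2$ and that $H=aa^\dag$ commutes with $t$.

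For $(1)\Leftrightarrow(2)$ I would first record that $(p-q)H=H(p-q)=p-q$, which upon expanding $pH-qH=p-q$ yields the single crucial relation $p(1-H)=q(1-H)$. Hence $pH=p\Leftrightarrow p(1-H)=0\Leftrightarrow q(1-H)=0\Leftrightarrow qH=q$, and this gives $(1)\Rightarrow(2)$ at once since $(p+q)H=pH+qH$. For the converse, $(p+q)(1-H)=p(1-H)+q(1-H)=2\,p(1-H)$, so the hypothesis that $2$ is invertible forces $p(1-H)=0$; this is the only place the invertibility of $2$ enters.

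For $(2)\Rightarrow(3)$ the key step is to compute the candidate $X=(p-q)^\dag(p+q)(p-q)^\dag=a^\dag s a^\dag$ explicitly. Using $a^\dag t=-t a^\dag$ one gets $X=(1-t)(a^\dag)^2=(a^\dag)^2(1-t)$, and then, using $a^2+t^2=1$ together with $a^2(a^\dag)^2=(a^\dag)^2 a^2=H$, the products collapse to $sX=Xs=H$. Since $H=H^*$ this settles the two self-adjointness Penrose equations. The remaining two are $XsX=XH=X$, which holds unconditionally because $(a^\dag)^2 H=(a^\dag)^2$, and $sXs=Hs=s$, whose last equality is exactly the hypothesis (condition $(2)$ gives $sH=s$, hence $Hs=s$ after taking adjoints of a self-adjoint $s$). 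Therefore $X=(p+q)^\dag$ with the claimed formula.

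Finally $(3)\Rightarrow(2)$ is essentially free once $sX=H$ is known: if $X=(p+q)^\dag$, then $ss^\dag=H$, so $Hs=ss^\dag s=s$, and taking adjoints returns $sH=s$, which is $(2)$. The main obstacle, and the conceptual heart of the proof, is spotting the relations $at=-ta$ and $a^2+t^2=1$ and lifting the anticommutation to $a^\dag$ via Lemma~2.1; once these are in hand the identity $sX=Xs=H$ does virtually all the work, the verification of the Penrose equations becomes routine, and the role of $2$ being invertible is confined to the single implication $(2)\Rightarrow(1)$.
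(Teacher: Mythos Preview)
Your proof is correct and takes a genuinely different route from the paper. The paper works directly with the identities $p(p-q)^\dag=(p-q)^\dag(1-q)$ and $q(p-q)^\dag=(p-q)^\dag(1-p)$ established in Theorem~\ref{FGH} and Corollary~\ref{FGH1}; it verifies the Penrose equations by rewriting $(p+q)(p-q)^\dag$ as $(p-q)^\dag(2-p-q)$ and then using $(2-p-q)(p+q)=(p-q)^2$, and its $(3)\Rightarrow(2)$ is a line-by-line expansion of $(p+q)(p+q)^\dag(p+q)$. You instead extract the pair $a=p-q$, $t=p+q-1$ satisfying $at=-ta$ and $a^2+t^2=1$, lift the anticommutation to $a^\dag$ via Lemma~2.1, and obtain the single identity $sX=Xs=H$ in one stroke; this makes the Penrose verifications almost immediate and collapses $(3)\Rightarrow(2)$ to the observation that $ss^\dag=H$ forces $sH=s$. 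Your argument is more self-contained (it does not rely on Theorem~\ref{FGH} or Corollary~\ref{FGH1}), it isolates precisely where the invertibility of $2$ is used, and the anticommutation device is a reusable structural insight; the paper's route stays closer to the projector calculus built up earlier in the section and makes its dependence on those preliminary identities explicit.
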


\begin{proof}

$(1)\Rightarrow(2)$ If $pH=p$, then $qH=q$ by the symmetry of $p$ and $q$. Hence $(p+q)H=(p+q)$.

$(2)\Rightarrow(1)$ Note that $H=(p-q)(p-q)^\dag$ and $p-q$ is normal. We have $(p-q)H=p-q$ and $p+q=(p+q)H=(q-p)H+2pH=-(p-q)+2pH$, which implies $2pH=2p$. Hence, $pH=p$ since $2$ is invertible.

$(2)\Rightarrow(3)$ Let $x=(p-q)^\dag(p+q)(p-q)^\dag$. We prove that $x$ is the MP-inverse of $p+q$ by checking four equations of Penrose.

(i) $((p+q)x)^*=(p+q)x$. Indeed,
\begin{eqnarray*}
(p+q)x&=&(p+q)(p-q)^\dag(p+q)(p-q)^\dag\\
&=& (p-q)^\dag(1-q+1-p)(p+q)(p-q)^\dag\\
&=& (p-q)^\dag(p-q)^2(p-q)^\dag\\
&=& (p-q)(p-q)^\dag.
\end{eqnarray*}

(ii) $(x(p+q))^*=x(p+q)$. By similar proof of (i), we have $x(p+q)=(p-q)^\dag(p-q)$.

(iii) Note that the relations $pH=Hp$ and $qH=Hq$ in Corollary \ref{FGH2}. Then
\begin{eqnarray*}
(p+q)x(p+q)&=&(p-q)(p-q)^\dag(p+q)\\
&=& H(p+q)=(p+q)H\\
&=&p+q.
\end{eqnarray*}

(iv) It follows that $x(p+q)x=(p-q)^\dag(p+q)(p-q)^\dag (p-q)(p-q)^\dag=x$.

$(3)\Rightarrow(2)$ As $p+q\in R^\dag$ with $(p+q)^\dag=(p-q)^\dag(p+q)(p-q)^\dag$, then
\begin{eqnarray*}
p+q&=&(p+q)(p+q)^\dag (p+q)=(p+q)(p-q)^\dag(p+q)(p-q)^\dag (p+q) \\
&=& (p+q)(p-q)^\dag(p-q)^\dag (1-q+1-p)(p+q)\\
&=&(p+q)(p-q)^\dag(p-q)^\dag [(1-q)p+(1-p)q]\\
&=&(p+q)(p-q)^\dag(p-q)^\dag [(p-q)p+(q-p)q]\\
&=&(p+q)(p-q)^\dag(p-q)^\dag (p-q)p- (p+q)(p-q)^\dag(p-q)^\dag(p-q)q\\
&=&(p+q)(p-q)^\dag(p-q)(p-q)^\dag p- (p+q)(p-q)^\dag(p-q)(p-q)^\dag q\\
&=&(p+q)(p-q)^\dag p- (p+q)(p-q)^\dag q\\
&=&(p+q)(p-q)^\dag (p-q)\\
&=&(p+q)H.
\end{eqnarray*}
\end{proof}

Next, we give a new necessary and sufficient condition of the existence of $(p+q)^\dag$, where $p$ and $q$ commute.

\begin{theorem} Let $p,q\in R$ with $pq=qp$. Then $p+q\in R^\dag$ if and only if $1+pq\in R^\dag$.

In this case, $(p+q)^\dag=(1+pq)^\dag p+q(1-p)$ and $(1+pq)^\dag=(p+q)^\dag p+1-p$.
\end{theorem}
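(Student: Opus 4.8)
The plan is to treat both directions as direct verifications of the two proposed formulas, exploiting that $p$ and $q$ commute. The starting observation is that $e:=pq=qp$ is itself a projector, so $1+pq=1+e$ with $1-e$ and $e$ orthogonal projectors, and that $p,q,e$ all commute with $1+pq$ (for instance $p(1+pq)=p+pqp=p+pq=(1+pq)p$, using $pqp=pq$). Consequently, whenever $1+pq\in R^\dag$, Lemma~2.1 applied to the self-adjoint element $1+pq$ (with $a_1=a_2=1+pq$, $d=p$) shows that $p,q,e$ also commute with $(1+pq)^\dag$, and $(1+pq)^\dag$ is itself self-adjoint. The same remarks apply to $p+q$ in the converse direction, since $p,q,e$ commute with $p+q$ as well.

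The technical heart is a single master relation: $(1+pq)^\dag(1-e)=1-e$, equivalently $(1+pq)^\dag e=(1+pq)^\dag-1+e$. I would prove it by noting that $1-e$ is a self-adjoint idempotent fixed by $1+pq$ (indeed $(1+pq)(1-e)=1-e^2=1-e$) and commuting with $(1+pq)^\dag$; multiplying the Penrose equation $(1+pq)(1+pq)^\dag(1+pq)=1+pq$ on the right by $1-e$ and using that $1+pq$ is self-adjoint (so $(1+pq)(1+pq)^\dag=(1+pq)^\dag(1+pq)$) then forces $(1+pq)^\dag(1-e)=1-e$. Since $(1-e)(1-p)=1-p$ and $(1-e)(1-q)=1-q$, this immediately yields $(1+pq)^\dag(1-p)=1-p$ and $(1+pq)^\dag(1-q)=1-q$. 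Feeding the master relation back into the first two Penrose equations of $1+pq$ produces the two quantitative identities I will actually need, namely $4(1+pq)^\dag=4-2pq$ (from $(1+pq)(1+pq)^\dag(1+pq)=1+pq$, via $(1+pq)^2=1+3pq$) and $2\bigl((1+pq)^\dag\bigr)^2=(1+pq)^\dag+1-pq$ (from $(1+pq)^\dag(1+pq)(1+pq)^\dag=(1+pq)^\dag$).

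For the implication $1+pq\in R^\dag\Rightarrow p+q\in R^\dag$, I set $x=(1+pq)^\dag p+q(1-p)=(1+pq)^\dag p+q-pq$ and check the four Penrose equations for $x$ against $p+q$. Using the commutations and the fixed-point relations above, both $(p+q)x$ and $x(p+q)$ collapse to the single self-adjoint expression $2(1+pq)^\dag+(p+q)-2$; this settles the two symmetry equations and shows $(p+q)x=x(p+q)$. Expanding the remaining equations $(p+q)x(p+q)=p+q$ and $x(p+q)x=x$ and repeatedly applying $(p+q)^2=(p+q)+2pq$, they reduce precisely to the two quantitative identities $4(1+pq)^\dag=4-2pq$ and $2((1+pq)^\dag)^2=(1+pq)^\dag+1-pq$, hence hold. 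This proves $p+q\in R^\dag$ with $(p+q)^\dag=x$, i.e.\ the first stated formula. The converse is carried out in the same manner: assuming $p+q\in R^\dag$, I set $y=(p+q)^\dag p+(1-p)$ and verify the Penrose equations for $y$ against $1+pq$, now using that $(p+q)^\dag$ fixes $p-pq$ and $q-pq$ and annihilates $1-p-q+pq$, together with the analogous quantitative identities coming from the Penrose equations of $p+q$; this gives $1+pq\in R^\dag$ and the second formula, closing the equivalence.

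The main obstacle is that $2$ need not be invertible, so the shared ``$2pq$'' part of $1+pq$ and of $p+q$ cannot be normalised away by rescaling the projector $pq$. The whole argument hinges on recognising that $1+pq$ and $p+q$ differ only off this common block, and that the non-invertible factor $2$ must nonetheless be carried through symbolically and cancelled using the Penrose equations of whichever element is assumed invertible --- this is exactly the role of $4(1+pq)^\dag=4-2pq$ and $2((1+pq)^\dag)^2=(1+pq)^\dag+1-pq$. The self-adjointness equations are routine; making the first and second Penrose equations close without dividing by $2$ is the delicate step.
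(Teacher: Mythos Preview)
Your proposal is correct, but it takes a substantially longer route than the paper's. The paper dispatches the implication $p+q\in R^\dag\Rightarrow 1+pq\in R^\dag$ in one line by observing $1+pq=p(p+q)+1\cdot(1-p)$ and invoking Theorem~\ref{ap+b(1-p)} (the $ap+b(1-p)$ formula) with $a=p+q$, $b=1$; this immediately yields both the existence of $(1+pq)^\dag$ and the formula $(1+pq)^\dag=(p+q)^\dag p+1-p$. For the converse, the paper also verifies directly that $x=(1+pq)^\dag p+q(1-p)$ satisfies the Penrose equations, but the computation is much shorter than yours: the key is to \emph{not} break $(1+pq)^\dag$ apart. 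One simply groups
\[
(p+q)x=(1+pq)^\dag(p+pq)+q(1-p)=(1+pq)^\dag(1+pq)\,p+q(1-p),
\]
which is visibly self-adjoint (the projector $(1+pq)^\dag(1+pq)$ commutes with $p$), and then the outer Penrose equations follow from $(1+pq)^\dag(1+pq)(1+pq)=1+pq$ together with $p(1+pq)=(p+q)p$, without ever needing your master relation or the identities $4(1+pq)^\dag=4-2pq$ and $2((1+pq)^\dag)^2=(1+pq)^\dag+1-pq$.

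Those identities you derive are true and mildly interesting in their own right, but they are detours here; in particular, your closing worry about having to ``carry the factor $2$ symbolically'' is an artefact of the method, not of the problem --- the paper's argument never sees a $2$ at all. Also note that your converse sketch overcomplicates matters: once you set $y=(p+q)^\dag p+1-p$, the verification of the four Penrose equations for $1+pq$ collapses immediately via $p(1+pq)=(p+q)p$ and $(1+pq)(1-p)=1-p$, with no need for any ``analogous quantitative identities''. So your plan would go through, but you should be aware that the natural block decomposition along $p$ does almost all the work.
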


\begin{proof} Suppose $p+q\in R^\dag$. As $1+pq=p(p+q)+1-p$, then $(1+pq)^\dag=(p+q)^\dag p+1-p$ by Theorem \ref{ap+b(1-p)}.

Conversely, let $x=(1+pq)^\dag p+q(1-p)$. We next show that $x$ is the MP-inverse of $p+q$.

(i) $[(p+q)x]^*=(p+q)x$. We have
\begin{eqnarray*}
(p+q)x&=&(p+q)[(1+pq)^\dag p+q(1-p)]\\
&=&(1+pq)^\dag p+(1+pq)^\dag pq+q(1-p)\\
&=& (1+pq)^\dag (1+pq) p+q(1-p).
\end{eqnarray*}

Hence, $[(p+q)x]^*=(p+q)x$.

(ii) It follows that $[x(p+q)]^*=x(p+q)$ since $p$ and $q$ commute.

(iii) $(p+q)x(p+q)=p+q$. Indeed,
\begin{eqnarray*}
(p+q)x(p+q)&=& (p+q)[(1+pq)^\dag (1+pq) p+q(1-p)]\\
&=&(1+pq)^\dag (1+pq) p+(1+pq)^\dag (1+pq) pq+q(1-p)\\
&=&(1+pq)^\dag (1+pq)p(1+pq)+q(1-pq)\\
&=&p(1+pq)+q(1-pq)\\
&=& p+q.
\end{eqnarray*}

(iv) By a similar way of (3), we get $x(p+q)x=x$.

Thus, $(p+q)^\dag=(1+pq)^\dag p+q(1-p)$.
\end{proof}

The next theorem, a main result of this paper, admits proficient skills on $F$, $G$ and $H$, expressing the formulae of the MP-inverse of difference of projectors.

\begin{theorem} \label{some equ} Let $p-q\in R^\dag$. Then

$(1)$ $(1-pqp)^\dag=p((p-q)^\dag)^2+(1-p)$,

$(2)$ $(1-pq)^\dag=p((p-q)^\dag)^2-pq(p-q)^\dag+1-p$,

$(3)$ $(p-pqp)^\dag=p((p-q)^\dag)^2$,

$(4)$ If $p-pq$ is $*$-cancellable, then $(p-pq)^\dag=(p-q)^\dag p$,

$(5)$ If $p-pq$ is $*$-cancellable, then $(p-qp)^\dag=p(p-q)^\dag$.
\end{theorem}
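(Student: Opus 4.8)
The plan is to fix $t=(p-q)^\dag$ and build everything from facts already in hand: $t^*=t$, $\big((p-q)^2\big)^\dag=t^2$, the fact (via Lemma 2.1) that each of $p,q$ commutes with $t^2$, the identities $(p-q)^2t^2=t^2(p-q)^2=H$ and $(p-q)H=p-q=H(p-q)$ (from $aa^\dag a=a$ and normality of $p-q$), together with the dictionary of Theorem \ref{FGH} and Corollaries \ref{FGH1}, \ref{FGH2}: $pt=F=t(1-q)$, $tp=G=(1-q)t$, $qt=t(1-p)$, $(1-p)t=tq$, $F^*=G$, and $Fp=pG=pH=Hp$. I also record $pqt=p\,t(1-p)=F(1-p)=F-Fp$.

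For (1) I would write $1-pqp=(p-q)^2p+(1-p)$ and apply Theorem \ref{ap+b(1-p)} with the self-adjoint $a=(p-q)^2$ (which commutes with $p$) and $b=1$, giving $(1-pqp)^\dag=t^2p+(1-p)=pt^2+(1-p)$. For (3) I would factor $p-pqp=p(1-pqp)=(1-pqp)p$, a product of commuting, $*$-commuting MP-invertible elements, and apply Lemma \ref{ab,a'b} to get $(p-pqp)^\dag=p\,(1-pqp)^\dag=pt^2$. For (4), since $p-pq=p(1-q)$ and $(p-pq)(p-pq)^*=p(1-q)p=p-pqp\in R^\dag$ by (3), the $*$-cancellability of $p-pq$ and \cite[Theorem 5.4]{KP} give $p-pq\in R^\dag$ with $(p-pq)^\dag=(p-pq)^*\big((p-pq)(p-pq)^*\big)^\dag=(1-q)p\,pt^2=(1-q)pt^2$; this collapses to $G$ because $(1-q)pt^2=(1-q)Ft=(1-q)t(1-q)t=G^2=G=(p-q)^\dag p$. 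Finally (5) is immediate on taking adjoints: $p-qp=(p-pq)^*$, so $(p-qp)^\dag=\big((p-pq)^\dag\big)^*=G^*=(tp)^*=pt=F$.

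The substantial case is (2). I would verify that $x=pt^2-pqt+(1-p)$ is the Moore--Penrose inverse of $1-pq$ via the four Penrose equations, first rewriting $x=pt^2+(1-F)(1-p)$ using $pqt=F(1-p)$. One side is clean: since $pt^2(1-pq)=p(1-q)t^2$ and $p(1-q)t^2=(pG)t=(Fp)t=F^2=F$, while $(1-p)(1-pq)=1-p$, we get $x(1-pq)=F+(1-F)(1-p)=Fp+(1-p)$, a Hermitian idempotent because $F^*=G$ and $Fp=pG$. Equation (iii) then follows, as $(1-pq)Fp=(p-q)^2Fp=(p-q)^2Hp=(p-q)^2p=p-pqp$ (using $Fp=Hp$ and $(p-q)H=p-q$) gives $(1-pq)(Fp+1-p)=(p-pqp)+(1-pq)(1-p)=1-pq$; and (iv) reduces to verifying that the idempotent $Fp+(1-p)$ fixes $x$ on the left.

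The main obstacle is the remaining Hermitian condition $\big((1-pq)x\big)^*=(1-pq)x$. Unlike $x(1-pq)$, the product $(1-pq)x$ is not trimmed by the one-sided identity $(1-p)(1-pq)=1-p$; I expect to expand $(1-pq)x=Hp+(1-pq)(1-F)(1-p)$ and reduce its off-diagonal term $(1-pq)F(1-p)$ to self-adjoint form by grinding through the commutation dictionary ($qt=t(1-p)$, $(1-p)t=tq$, $pt^2=t^2p$, $qt^2=t^2q$, together with $F=pt=t(1-q)$ and $G=tp=(1-q)t$). This is precisely where the skills on $F,G,H$ are needed and where the ordering of factors matters most. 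As motivation for the form of the answer, note that $1-pq=\big(1-pq(1-p)\big)(1-pqp)$ with $1-pq(1-p)$ a unit (since $pq(1-p)$ squares to zero), which explains why $(1-pq)^\dag$ is a correction of $(1-pqp)^\dag$; but as the Moore--Penrose inverse is not multiplicative, this only guides the guess and is not used in the verification.
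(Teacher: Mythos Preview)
Your treatment of parts (1), (3), (4), (5) is essentially the paper's: (1) via Theorem~\ref{ap+b(1-p)} applied to $1-pqp=(p-q)^2p+(1-p)$; (3) via Lemma~\ref{ab,a'b} (the paper factors $p-pqp=p(p-q)^2$ directly rather than $p(1-pqp)$, but this is immaterial); (4) via $a^\dag=a^*(aa^*)^\dag$ and the formula from (3); (5) by taking adjoints. For (2), both you and the paper verify the four Penrose equations for the candidate $x$. Your $x(1-pq)=Fp+(1-p)$ is the paper's $p(p-q)^\dag p+(1-p)$, and your handling of (iii) and (iv) is correct.

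The only gap is the one you yourself flag: you do not complete the Hermitian check for $(1-pq)x$. In fact no ``grinding'' is needed. From your own decomposition $(1-pq)x=Hp+(1-pq)(1-F)(1-p)$, the second summand collapses in one line once you notice
\[
(1-pq)F=(1-pq)\,pt=(p-pqp)t=p(p-q)^2t=p(p-q),
\]
using $(p-q)^2(p-q)^\dag=p-q$. Then $(1-pq)F(1-p)=(p-pq)(1-p)=pqp-pq$, while $(1-pq)(1-p)=1-p-pq+pqp$, so $(1-pq)(1-F)(1-p)=1-p$. Hence $(1-pq)x=Hp+(1-p)=pH+(1-p)$, which is self-adjoint since $pH=Hp$ and $H^*=H$. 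This is precisely the expression the paper obtains; the paper reaches it by expanding $(1-pq)x$ term by term from the original form $x=pt^2-pqt+(1-p)$ rather than via your $F$-rewriting, but the two computations are of comparable length. So your approach is correct and the same as the paper's in spirit; you simply stopped one step short of closing (i).
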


\begin{proof} (1) As $1-pqp=p(p-q)^2+1-p$, then $(1-pqp)^\dag=p((p-q)^\dag)^2+1-p$ according to Theorem \ref{ap+b(1-p)}.

(2) It follows from Theorem \ref{equivalence} that $p-q\in R^\dag$ implies $1-pq\in R^\dag$. Let $x=p((p-q)^\dag)^2-pq(p-q)^\dag+1-p$. We next show that $x$ is the MP-inverse of $1-pq$.

(i) We have
\begin{eqnarray*}
(1-pq)x&=&(1-pq)[p((p-q)^\dag)^2-pq(p-q)^\dag+1-p]\\
&=&(p-pqp)((p-q)^\dag)^2-(1-pq)pq(p-q)^\dag+(1-pq)(1-p)\\
&=& p(p-q)^2((p-q)^\dag)^2-(p-pqp)(p-q)^\dag (1-p)+(1-pq)(1-p)\\
&=&p(p-q)(p-q)^\dag-p(p-q)^2(p-q)^\dag(1-p)+(1-pq)(1-p)\\
&=&p(p-q)(p-q)^\dag-p(p-q)(1-p)+(1-pq)(1-p)\\
&=&p(p-q)(p-q)^\dag+1-p\\
&=&pH+1-p.
\end{eqnarray*}

Hence, $((1-pq)x)^*=(1-pq)x$ since $pH=Hp$ and $H^*=H$.

(ii) We get $x(1-pq)=p(p-q)^\dag p+1-p$. Hence, $(x(1-pq))^*=x(1-pq)$.

(iii) $(1-pq)x(1-pq)=1-pq$. Indeed,
\begin{eqnarray*}
(1-pq)x(1-pq)&=& (pH+1-p)(1-pq)= Hp(1-pq)+(1-p)(1-pq)\\
&=&Hp(p-pq)+1-p=pH(p-pq)+1-p\\
&=&pHp(p-q)+1-p=pH(p-q)+1-p\\
&=&p(p-q)+1-p\\
&=&1-pq.
\end{eqnarray*}

(iv) $x(1-pq)x=1-pq$. Actually, we can obtain this result by a similar proof of (iii).

(3) Since $p-pqp=p(p-q)^2=(p-q)^2p$, we get $(p-pqp)^\dag=p((p-q)^\dag)^2$ by Lemma \ref{ab,a'b}.

(4) Keeping in mind that $a^\dag=a^*(aa^*)^\dag=(a^*a)^\dag a^*$, we have $(p-pq)^\dag=(p-qp)p((p-q)^\dag)^2=(p-q)((p-q)^\dag)^2p=(p-q)^\dag p$.

(5) Note that $a$ is $*$-cancellable if and only if $a^*$ is $*$-cancellable. It follows from $(a^*)^\dag=(a^\dag)^*$ that $(p-qp)^\dag=p(p-q)^\dag$.
\end{proof}

\begin{corollary} Let $p-pq$ be $*$-cancellable and let $1-pq\in R^\dag$. Then $p-q\in R^\dag$ and
\begin{center}
$(p-q)^\dag=(1-pq)^\dag (p-pq)+(p+q-pq)^\dag(pq-q)$.
\end{center}
\end{corollary}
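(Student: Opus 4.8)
The plan is to first establish that $p-q\in R^\dag$ from the hypothesis $1-pq\in R^\dag$, and then to verify the displayed formula by the Penrose equations, leaning heavily on the expressions already computed in Theorem \ref{some equ} and the identities for $F$, $G$, $H$ in Corollaries \ref{FGH1} and \ref{FGH2}. For the first part, since $p-pq$ is $*$-cancellable and $1-pq\in R^\dag$, Proposition \ref{equ} (via the chain $(1)\Rightarrow(4)$) gives $p-pq\in R^\dag$. Then, because $p(1-q)$ is $*$-cancellable (equivalently $p-pq$ is), Theorem \ref{equivalence} applies and yields $p-q\in R^\dag$. This sets up all the machinery: $(p-q)^\dag$ exists, so $F$, $G$, $H$ are defined and satisfy all the relations proved above.

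Next I would rewrite the proposed right-hand side in a form suited to computation. Using Theorem \ref{some equ}(4) we have $(p-q)^\dag p=(p-pq)^\dag$ and by part (5) $p(p-q)^\dag=(p-qp)^\dag$; the factors $(1-pq)^\dag$ and $(p+q-pq)^\dag$ can be expanded through Theorem \ref{some equ}(2) and its analogue under $p\mapsto 1-p,\ q\mapsto 1-q$ (the substitution that drives $(1)\Leftrightarrow(3)$ in Theorem \ref{equivalence}). My intention is to show directly that
\begin{equation*}
(1-pq)^\dag(p-pq)+(p+q-pq)^\dag(pq-q)=F+G-H,
\end{equation*}
since Corollary \ref{FGH2}(4) already identifies $F+G-H$ with $(p-q)^\dag$. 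Thus the whole problem reduces to an algebraic identity among the idempotents $F$, $G$ and the projector $H$, after substituting the known closed forms for the two Moore--Penrose inverses appearing on the left.

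The key steps, in order, are: (a) derive $p-q\in R^\dag$ as above; (b) substitute the formula from Theorem \ref{some equ}(2) for $(1-pq)^\dag$ and the corresponding $1-p,1-q$ version for $(p+q-pq)^\dag$; (c) multiply out, repeatedly using $(p-q)^\dag(p-q)=(p-q)(p-q)^\dag=H$, the idempotency $p^2=p$, $q^2=q$, and the commutation relations $pH=Hp$, $qH=Hq$ from Corollary \ref{FGH2}; and (d) collapse the result to $F+G-H=(p-q)^\dag$. The main obstacle will be step (c): the two expanded Moore--Penrose inverses each contribute three terms, and matching the cross terms $pq(p-q)^\dag$, $p((p-q)^\dag)^2$, etc., against $F$, $G$, $H$ requires careful use of the "pull-through" identities $p(p-q)^\dag=(p-q)^\dag(1-q)$ (Theorem \ref{FGH}(1)) and $q(p-q)^\dag=(p-q)^\dag(1-p)$ (Corollary \ref{FGH1}(1)) to move projectors across $(p-q)^\dag$ and cancel. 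Once those substitutions are organized, the bookkeeping should close, but it is precisely this term-by-term reconciliation that carries the real work.
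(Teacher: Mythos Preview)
Your plan is correct and matches the paper's own argument: both invoke Theorem~\ref{equivalence} to obtain $p-q\in R^\dag$, and both expand the right-hand side via Theorem~\ref{some equ}(2) together with its $p\mapsto 1-p$, $q\mapsto 1-q$ analogue $(p+q-pq)^\dag=(1-p)((p-q)^\dag)^2+(1-p)(1-q)(p-q)^\dag+p$. The only difference is organizational---the paper says ``it is straight to check'' the four Penrose equations, whereas you reduce the expression to $F+G-H$ and quote Corollary~\ref{FGH2}(4); either route is short (in fact $(1-pq)^\dag(p-pq)=F$ and $(p+q-pq)^\dag(pq-q)=(1-p)(p-q)^\dag=G-H$ drop out almost immediately once one uses $q(1-q)=0$ and the pull-through identities you listed).
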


\begin{proof} From Theorem \ref{equivalence}, we have $p-q\in R^\dag\Leftrightarrow 1-pq\in R^\dag$.

By Theorem \ref{some equ} (2), we have $(p+q-pq)^\dag=(1-p)((p-q)^\dag)^2+(1-p)(1-q)(p-q)^\dag+p$.
It is straight to check that $(1-pq)^\dag (p-pq)+(p+q-pq)^\dag(pq-q)$ satisfies four equations of Penrose.
\end{proof}

The following result is motivated by \cite{D0}, therein, Deng considered the Drazin inverses of difference of idempotent bounded operators on Hilbert spaces.

\begin{theorem} Let $pq-qp$ be $*$-cancellable. Then

$(1)$ $(p-q)^\dag=p-q$ if and only if $pq=qp$,

$(2)$ If $6$ is invertible in $R$, then $(p+q)^\dag=p+q$ if and only if $pq=0$.
\end{theorem}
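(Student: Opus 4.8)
The plan is to reduce both equivalences to the elementary fact that for a Hermitian element $a=a^*$ one has $a^\dag=a$ if and only if $a^3=a$. Indeed, $a^\dag=a$ forces $a^3=aa^\dag a=a$; conversely, if $a=a^*$ and $a^3=a$, then $b=a$ satisfies all four Penrose equations, since $aba=bab=a^3=a$ and the two symmetry conditions hold because $a^2$ is Hermitian. As $(p-q)^*=p-q$ and $(p+q)^*=p+q$, both $p-q$ and $p+q$ are Hermitian, so $(p-q)^\dag=p-q$ is equivalent to $(p-q)^3=p-q$, and $(p+q)^\dag=p+q$ is equivalent to $(p+q)^3=p+q$. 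This turns each part into a purely multiplicative identity among $p$, $q$.

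For part $(1)$ I would expand $(p-q)^3=p-q-pqp+qpq$, so that $(p-q)^3=p-q$ is equivalent to the single relation $pqp=qpq$. The implication $pq=qp\Rightarrow pqp=qpq$ is immediate, since both sides then reduce to $pq$; this settles the ``if'' direction. For the converse, put $c=pq-qp$ and note $c^*=-c$. A direct computation gives $c^2=pqpq-pqp-qpq+qpqp$, and the relation $pqp=qpq$ forces $pqpq=pqp=qpqp$ (multiply $pqp=qpq$ on the left, resp. right, by $p$), so that $c^2=0$. Then $c^*c=-c^2=0$, and the $*$-cancellability of $c$, applied with $x=1$ in the defining implication $c^*cx=0\Rightarrow cx=0$, yields $c=0$, i.e.\ $pq=qp$.

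For part $(2)$ I would expand $(p+q)^3=p+q+2pq+2qp+pqp+qpq$, so the hypothesis becomes $2pq+2qp+pqp+qpq=0$. The crucial idea is to compress this mixed-length relation by conjugating with the projectors $p$ and $q$ to isolate shorter words. Multiplying on the left by $p$ and on the right by $q$ gives $2pq+4pqpq=0$, that is $pq+2pqpq=0$; sandwiching by $p$ on both sides gives $5pqp+pqpqp=0$. Right-multiplying the first identity by $p$ produces $2pqpqp=-pqp$, and substituting this into twice the second identity gives $10pqp-pqp=9pqp=0$; since $6$, hence $3$, is invertible we obtain $pqp=0$. Feeding this back, $pqpq=(pqp)q=0$, so $pq+2pqpq=0$ collapses to $pq=0$. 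Conversely, if $pq=0$ then $qp=(pq)^*=0$, whence $(p+q)^2=p+q$ makes $p+q$ a projector and thus $(p+q)^\dag=p+q$.

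The expansions of the cubes and the subsequent word arithmetic are routine; the one genuine idea differs in each part. In part $(1)$ it is the observation that the cancellation hypothesis is exactly what upgrades the nilpotency $c^2=0$, equivalently $c^*c=0$, to $c=0$, a step unavailable in a general ring. In part $(2)$ the main obstacle is organizing the degree reduction, since $2pq+2qp+pqp+qpq=0$ entangles words of length one and two; the trick of conjugating by the idempotents $p$ and $q$ is what separates them into the two usable identities from which $pq=0$ drops out once $6$ is inverted. I would take particular care with the numerical coefficients, as the whole argument hinges on reaching $9pqp=0$ with an invertible factor.
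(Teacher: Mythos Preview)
Your argument is correct. Part~(1) is essentially identical to the paper's proof: both reduce $(p-q)^\dag=p-q$ to $(p-q)^3=p-q$, i.e.\ $pqp=qpq$, and then use $*$-cancellability of $pq-qp$ to pass from $(pq-qp)^*(pq-qp)=0$ to $pq=qp$.

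Part~(2), however, takes a genuinely different route. The paper multiplies $2pq+2qp+pqp+qpq=0$ on the left by $p$ and, separately, on the right by $q$, obtaining $2pq+3pqp+pqpq=0$ and $2pq+3qpq+pqpq=0$; subtracting and cancelling the invertible $3$ yields $pqp=qpq$, and then the $*$-cancellability hypothesis is invoked exactly as in part~(1) to conclude $pq=qp$, after which the original relation collapses to $6pq=0$. Your approach instead compresses with $p(\cdot)q$ and $p(\cdot)p$ to obtain $pq+2pqpq=0$ and $5pqp+pqpqp=0$, and a short elimination gives $9pqp=0$, hence $pqp=0$ and finally $pq=0$. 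The notable difference is that your argument never appeals to the $*$-cancellability of $pq-qp$: you establish part~(2) under the sole assumption that $6$ is invertible, which is strictly less than what the theorem assumes. The paper's approach has the virtue of reusing the mechanism of part~(1), while yours shows that the cancellation hypothesis is in fact superfluous for part~(2).
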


\begin{proof} (1) If $pq=qp$, it is straightforward to check $(p-q)^\dag=p-q$.

Conversely, $(p-q)^\dag=p-q$ implies $(p-q)^3=p-q$, we get $pqp=qpq$ and hence $(pq-qp)^*(pq-qp)=0$. It follows that $pq=qp$ since $pq-qp$ is $*$-cancellable.

(2) Suppose $pq=0$. Then $p^*q=pq^*=0$ since $p$, $q$ are projectors. Then $(p+q)^\dag=p+q$ by Lemma 2.3.

Conversely, $(p+q)^\dag=p+q$ concludes $(p+q)^3=p+q$. By direct calculations, it follows that $2pq+2qp+pqp+qpq=0$. \hfill(3.1)

Multiplying the equality (3.1) by $p$ on the left yields
$2pq+3pqp+pqpq=0$. \hfill(3.2)

Multiplying the equality (3.1) by $q$ on the right gives
$2pq+3qpq+pqpq=0$. \hfill(3.3)

Combining the equalities $(3.2)$ and $(3.3)$, it follows that $pqp=qpq$ since 3 is invertible. As $pq-qp$ is $*$-cancellable, then $pqp=qpq$ implies $pq=qp$.
Hence, equality $(3.1)$ can be reduced to $6pq=0$.

Thus, $pq=0$.
\end{proof}

\begin{theorem} Let $1-p-q\in R^\dag$. Then

$(1)$ $pqp\in R^\dag$ and $(pqp)^\dag=p((1-p-q)^\dag)^2=((1-p-q)^\dag)^2p$,

$(2)$ If $pq$ is $*$-cancellable, then $pq\in R^\dag$ and $(pq)^\dag=qp((1-p-q)^\dag)^2$.
\end{theorem}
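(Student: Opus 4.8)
The plan is to exploit the fact that $a := 1-p-q$ is self-adjoint, hence normal, and that $pqp$ factors as $p$ times $a^2$ with the two factors commuting. First I would observe that $a^*=1-p^*-q^*=1-p-q=a$ because $p,q$ are projectors, so $a$ is normal and, by hypothesis, MP-invertible. Lemma~\ref{ab,a'b} then gives $a^2\in R^\dag$ with $(a^2)^\dag=(a^\dag)^2$.

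Next, a short direct computation using $p^2=p$ and $q^2=q$ shows $a^2=1-p-q+pq+qp$ and hence
\[
p\,a^2 = pqp = a^2\,p .
\]
Thus $p$ and $a^2$ commute, and since $p^*=p$ the second hypothesis $p^*a^2=a^2p^*$ of Lemma~\ref{ab,a'b} holds automatically. Applying that lemma to the pair $p,\,a^2$ (both MP-invertible, with $p^\dag=p$) yields $pqp=p\,a^2\in R^\dag$ together with
\[
(pqp)^\dag=(a^2)^\dag p^\dag=\bigl((1-p-q)^\dag\bigr)^2 p = p\bigl((1-p-q)^\dag\bigr)^2 ,
\]
which is part $(1)$.

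For part $(2)$ I would start from the identity $pq(pq)^*=pq\cdot qp=pqp$, using $(pq)^*=qp$ and $q^2=q$. Since $pqp\in R^\dag$ by part $(1)$ and $pq$ is assumed $*$-cancellable, \cite[Theorem 5.4]{KP} gives $pq\in R^\dag$. Finally, the identity $a^\dag=a^*(aa^*)^\dag$, already invoked in the proof of Theorem~\ref{some equ}, applied to $a=pq$ gives
\[
(pq)^\dag=(pq)^*(pqp)^\dag=qp\cdot p\bigl((1-p-q)^\dag\bigr)^2=qp\bigl((1-p-q)^\dag\bigr)^2 ,
\]
as claimed.

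The only real obstacle is the bookkeeping in the identity $pqp=p(1-p-q)^2=(1-p-q)^2p$; everything else is a direct appeal to Lemma~\ref{ab,a'b}, to \cite[Theorem 5.4]{KP}, and to the standard formula $a^\dag=a^*(aa^*)^\dag$. I would take some care to verify both commuting hypotheses of Lemma~\ref{ab,a'b}, although the self-adjointness of $p$ makes the $*$-version automatic once the plain commutation $p\,a^2=a^2\,p$ is established.
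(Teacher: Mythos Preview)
Your proposal is correct and follows essentially the same route as the paper: both use the self-adjointness of $1-p-q$ together with Lemma~\ref{ab,a'b} to get $((1-p-q)^2)^\dag=((1-p-q)^\dag)^2$, then invoke the identity $pqp=p(1-p-q)^2=(1-p-q)^2p$ and a second application of Lemma~\ref{ab,a'b} for part~(1), and for part~(2) use $pqp=pq(pq)^*$, \cite[Theorem~5.4]{KP}, and the formula $a^\dag=a^*(aa^*)^\dag$. Your write-up is slightly more explicit about the intermediate computation of $(1-p-q)^2$ and about checking both hypotheses of Lemma~\ref{ab,a'b}, but the argument is the same.
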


\begin{proof} (1) Since $(1-p-q)^*=1-p-q$, we have $((1-p-q)^2)^\dag=((1-p-q)^\dag)^2$ by Lemma \ref{ab,a'b}. As $pqp=p(1-p-q)^2=(1-p-q)^2p$, then $pqp\in R^\dag$ from Lemma \ref{ab,a'b} and hence $(pqp)^\dag=p((1-p-q)^\dag)^2=((1-p-q)^\dag)^2 p$.

(2) Note that $1-p-q\in R^\dag$ implies $pqp\in R^\dag$. As $pqp=pq(pq)^*$ and $pq$ is $*$-cancellable, then $pq\in R^\dag$ by \cite[Theorem 5.4]{KP}. The formula $a^\dag=a^*(aa^*)^\dag$ guarantees that $(pq)^\dag=qp((1-p-q)^\dag)^2$.
\end{proof}

\centerline {\bf ACKNOWLEDGMENTS} The authors are highly grateful to the anonymous referee for his/her valuable comments
which led to improvements of the paper. The first author is grateful to China Scholarship Council for supporting him to purse his further study in University of Minho, Portugal. This research is supported by the National Natural Science Foundation of China (No. 11371089), the Specialized Research Fund for the Doctoral Program of Higher Education (No. 20120092110020), the Natural Science Foundation of Jiangsu Province (No. BK20141327), the Foundation of Graduate Innovation Program of Jiangsu Province(No. CXLX13-072), the Scientific Research Foundation of Graduate School of Southeast University, the FEDER Funds through ¡®Programa Operacional Factores de Competitividade-COMPETE' and the Portuguese Funds through FCT- `Funda\c{c}\~{a}o para a Ci\^{e}ncia e Tecnologia', within the project PEst-OE/MAT/UI0013/2014.
\bigskip

\end{document}